\let\oldmarginpar\marginpar
\renewcommand{\marginpar}[1]{\oldmarginpar{\small\textit{{#1}}}}
\setlist[enumerate,1]{label=(\roman*),font=\normalfont}
\newtheorem{theorem}{Theorem}
\newtheorem{corollary}[theorem]{Corollary}
\newtheorem{lemma}[theorem]{Lemma}
\newtheorem{proposition}[theorem]{Proposition}
\theoremstyle{definition}
\theoremstyle{remark}
\crefname{remark}{Remark}{Remarks}
\crefname{claim}{Claim}{Claims}
\theoremstyle{definition}
\theoremstyle{remark}
\crefname{rmk}{Remark}{Remarks} 
\crefname{problem}{Problem}{Problems}
\crefname{conjecture}{Conjecture}{Conjectures}
\DeclareMathOperator{\conv}{conv}
\DeclareMathOperator{\lk}{lk}
\newcommand{\R}{\mathbb{R}}
\renewcommand{\ge}{\geqslant} 
\renewcommand{\le}{\leqslant}
\date{\today}
\title[A new proof of Balinski's theorem]{A new proof of Balinski's theorem on the connectivity of polytopes} 
\author{Guillermo Pineda-Villavicencio}
\address{Centre for Informatics and Applied Optimisation, Federation University,  Australia\\School of Information Technology, Deakin University, Geelong,  Australia} 
\email{\texttt{work@guillermo.com.au}}
\keywords{polytope, connectivity, separator, Balinski's theorem, boundary complex, link}
\subjclass[2010]{Primary 52B05; Secondary 52B12}
\begin{document}
\begin{abstract} 
 \cite{Bal61} proved that the graph of a $d$-dimensional convex polytope is $d$-connected. We provide a new proof of this result. Our proof provides details on the nature of a separating set  with exactly $d$ vertices; some of which appear to be new.     \end{abstract}

\maketitle  
 
\section{Introduction} 
A (convex) polytope is the convex hull of a finite set $X$ of points in $\R^{d}$; the \textit{convex hull} of $X$ is  the smallest convex set containing $X$.  The \textit{dimension} of a polytope in $\R^{d}$ is one less than the maximum number of affinely independent points in the polytope; a set of points $\vec p_{1},\ldots, \vec p_{k}$ in $\R^{d}$ is {\it affinely independent} if  the $k-1$ vectors $\vec p_{1}-\vec p_{k},\ldots, \vec p_{k-1}-\vec p_{k}$ are linearly independent.  A polytope of dimension $d$ is referred to as a \textit{$d$-polytope}.

A polytope  is structured around other polytopes, its faces. A {\it face} of a polytope $P$ in $\R^{d}$ is $P$ itself, or  the intersection of $P$ with a hyperplane in $\R^{d}$ that contains $P$ in one of its closed halfspaces.  A face of dimension 0, 1, and $d-1$ in a $d$-polytope is a \textit{vertex}, an {\it edge}, and a {\it facet}, respectively. The set of vertices and edges of a polytope or a graph are denoted by $V$ and $E$, respectively. The \textit{graph} $G(P)$ of a polytope  $P$ is the abstract graph with vertex set $V(P)$ and edge set $E(P)$.

 A graph with at least $d+1$ vertices is \textit{$d$-connected} if removing any $d-1$ vertices leaves a connected subgraph.  \cite{Bal61} showed that the graph of a $d$-polytope is $d$-connected. His proof considers a hyperplane in $\R^{d}$ passing through a set of $d-1$ vertices of a $d$-polytope, and so do the proofs of \citet[Thm.~11.3.2]{Gru03}, \citet[Thm.~3.14]{Zie95}, and \citet[Thm.~15.6]{Bro83}. Such proofs yield a geometric structure of separators in the graph of the polytope (\cref{lem:separator-hyperplane}). A set $X$ of vertices in a graph $G$ \textit{separates} two vertices $x,y$ if every  path in $G$ between $x$ and $y$ contains an element of $X$, and $x,y\notin X$. And  $X$ \textit{separates} $G$ if it separates two vertices of $G$. A separating set of vertices is a \textit{separator}  and a separator of cardinality $r$ is an \textit{$r$-separator}.
  
\begin{lemma}
\label{lem:separator-hyperplane}
Let $P$ be a $d$-polytope in $\R^{d}$ and let $H$ be a hyperplane in $\R^{d}$.  If $X$ is a proper subset of $H\cap V(P)$, then removing $X$ does not disconnect $G(P)$. In particular, a separator of $G(P)$ with exactly  $d$ vertices must form an affinely independent set in $\R^{d}$.
\end{lemma}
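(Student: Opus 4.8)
The plan is to run the classical monotone-path argument, using the given hyperplane $H$ itself. First I would choose an affine functional $\varphi$ on $\R^{d}$ with $H=\varphi^{-1}(0)$ and partition the vertices of $P$ into the sets $V^{+}$, $V^{-}$, $V^{0}$ on which $\varphi$ is positive, negative, and zero respectively, so that $V^{0}=H\cap V(P)$. Since $P$ is full-dimensional it is not contained in the hyperplane $H$, so $\varphi$ is non-constant on $P$. If $V^{0}=\varnothing$ there is no proper subset $X$ of $H\cap V(P)$ and the claim is vacuous, so I may assume $V^{0}\neq\varnothing$; then $\max_{P}\varphi\geq 0\geq\min_{P}\varphi$ with at least one inequality strict, and after possibly replacing $\varphi$ by $-\varphi$ I may assume $m^{+}:=\max_{P}\varphi>0$. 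Crucially, because $X$ is a \emph{proper} subset of $V^{0}$ I may also fix a ``spare'' vertex $w\in V^{0}\sm X$; this single vertex is what will glue the two sides of $H$ together, and the properness hypothesis is genuinely needed for this.

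The engine of the proof is the standard local-structure fact that the tangent cone of $P$ at a vertex $v$ is generated by the edges of $P$ at $v$; hence, if $v$ does not maximise $\varphi$ over $P$, then some edge $vu$ satisfies $\varphi(u)>\varphi(v)$. Iterating, from every vertex $v$ I obtain a path in $G(P)$ reaching the face $F^{+}:=\{x\in P:\varphi(x)=m^{+}\}$ along which $\varphi$ strictly increases, so every vertex of such a path has $\varphi$-value at least $\varphi(v)$; symmetrically, when $m^{-}:=\min_{P}\varphi<0$, strictly decreasing paths lead from every vertex to $F^{-}:=\{x\in P:\varphi(x)=m^{-}\}$. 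I would also use the elementary fact that the graph of any polytope, in particular of the faces $F^{+}$ and $F^{-}$, is connected (itself a consequence of the same monotone-path fact applied to a linear functional with a unique maximiser). Setting up this machinery precisely --- and observing that the ascending path out of a vertex of $V^{+}$ never leaves $V^{+}$, since $\varphi$ only increases from its positive starting value --- is the one genuinely delicate step; everything after it is assembly.

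I would then argue inside the graph $G(P)-X$ obtained from $G(P)$ by deleting $X$. For $v\in V^{+}$, the ascending path from $v$ to $F^{+}$ visits only vertices of positive $\varphi$-value, hence avoids $X\subseteq V^{0}$; since $V(F^{+})\subseteq V^{+}$ and $G(F^{+})$ is connected, all of $V^{+}$ lies in a single component $C$ of $G(P)-X$. Any $v\in V^{0}\sm X$ has $\varphi(v)=0<m^{+}$, so $v$ has a neighbour $u$ with $\varphi(u)>0$, i.e.\ $u\in V^{+}\subseteq C$; as neither $v$ nor $u$ lies in $X$, the edge $vu$ survives and $v\in C$ --- in particular $w\in C$. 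Finally, if $m^{-}<0$, the decreasing paths together with connectedness of $G(F^{-})$ put all of $V^{-}$ into one component $C'$; since $\varphi(w)=0>m^{-}$ the spare vertex $w$ has a neighbour $u'\in V^{-}$, and neither $w$ nor $u'$ lies in $X$, so $C'=C$ and hence $V^{-}\subseteq C$. If instead $m^{-}=0$ then $V^{-}=\varnothing$. Either way $V(P)\sm X=V^{+}\cup V^{-}\cup(V^{0}\sm X)\subseteq C$, so $G(P)-X$ is connected. (Properness of $X$ is essential here: for the regular octahedron and $H$ a coordinate hyperplane through four of its vertices, deleting all four vertices of $H$ disconnects the graph into its two remaining opposite vertices.)

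For the final sentence of the statement I would argue by contradiction. Suppose $X$ is a separator of $G(P)$ with $\card{X}=d$ but $X$ is affinely dependent, so $\dim\aff(X)\leq d-2$. Choosing any $v\in V(P)\sm X$, the flat $\aff(X\cup\{v\})$ has dimension at most $d-1$ and hence lies in some hyperplane $H'$; since $v\in H'\cap V(P)$ while $v\notin X$, the set $X$ is a proper subset of $H'\cap V(P)$. The first part of the lemma then gives that $G(P)-X$ is connected, contradicting that $X$ separates two vertices of $G(P)$. Therefore every separator of $G(P)$ with exactly $d$ vertices is affinely independent.
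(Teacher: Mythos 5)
Your argument is correct: the monotone-path machinery, the spare vertex $w\in V^{0}\sm X$ (which is exactly where properness of $X$ enters), and the reduction of the second assertion to the first via a hyperplane through $X$ and one additional vertex are all sound. The paper itself offers no proof of \cref{lem:separator-hyperplane} but extracts it from the classical hyperplane-based proofs it cites (Balinski, Gr\"unbaum, Ziegler, Br{\o}ndsted), and your proof is precisely that standard linear-functional argument carried out in full, so it follows the intended approach.
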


Other proofs with a geometric flavour were given by  \cite{BroMax89}  and \cite{Bar95}.  Our proof has a more combinatorial nature, relying on certain polytopal complexes in a polytope. Another combinatorial proof, based on a different idea,  can be found in \cite{Bar73a}.

The \textit{boundary complex} of a polytope $P$ is the set of faces of $P$ other than $P$ itself. And the \textit{link} of a vertex $x$ in $P$, denoted $\lk(x)$, is the set of faces of $P$ that do not contain $x$ but lie in a facet of $P$ that contains $x$ (\cref{fig:link-polytope}(b)).  We require a result from \citet{Zie95}. 
  
\begin{proposition}[{\citealt[Ex.~8.6]{Zie95}}]\label{prop:link-polytope} Let $P$ be a $d$-polytope. Then the link of a vertex in $P$ is combinatorially isomorphic to the boundary complex of a $(d-1)$-polytope. In particular, for each $d\ge 3$, the graph of the link of a vertex is isomorphic to the graph of a $(d-1)$-polytope.
\end{proposition}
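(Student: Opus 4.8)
The plan is to realise $\lk(x)$ as the boundary complex of a vertex figure of a polytope obtained from $P$ by a single ``beneath--beyond'' step. Write $\F_x$ for the set of facets of $P$ containing the vertex $x$. First I would pick a point $w$ lying \emph{just beyond} $x$ --- concretely, $w:=2x-y$, the reflection of $y$ in $x$, where $y$ is a point of the interior of $P$ sufficiently close to $x$. Then $w$ lies strictly on the outer side of the supporting hyperplane of each facet in $\F_x$ (such a facet contains $x$ on its hyperplane and $y$ strictly inside, so its reflection $w$ lies strictly outside), and $w$ lies strictly on the inner side of the supporting hyperplane of every other facet of $P$ (for $y$ close to $x$, since $x$ itself lies strictly inside those). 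Set $P^{+}:=\conv(P\cup\{w\})$; this is again a $d$-polytope, with $w$ as a vertex.

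Next I would apply the beneath--beyond description of the faces of $P^{+}$ (see, e.g., \citealt[\S5.2]{Gru03}). Since the facets of $P$ that $w$ lies beyond are exactly those in $\F_x$, the assignment $G\mapsto\conv(G\cup\{w\})$ is an inclusion-preserving bijection from the set of faces $G$ of $P$ that lie in some facet of $\F_x$ \emph{and} in some facet of $P$ outside $\F_x$ --- the faces ``on the horizon'' seen from $w$ --- onto the family $\{\,F:\{w\}\subsetneq F\subsetneq P^{+}\,\}$. The crucial claim is that the horizon faces are exactly the faces in $\lk(x)$. One inclusion is immediate: a horizon face lies in a facet of $P$ outside $\F_x$, hence avoids $x$, and it also lies in a facet of $\F_x$. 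For the converse, let $G\in\lk(x)$, so $x\notin G$ while $G$ lies in some facet of $\F_x$; if every facet of $P$ containing $G$ lay in $\F_x$, then $G$ --- being the intersection of the facets of $P$ that contain it --- would contain $x$, contradicting $x\notin G$. Hence $G$ lies in some facet of $P$ outside $\F_x$ too, so $G$ is a horizon face.

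Finally I would compose this bijection with the classical correspondence for vertex figures: fixing a hyperplane $H$ that separates $w$ from $V(P^{+})\sm\{w\}$, the vertex figure $Q:=P^{+}\cap H$ is a $(d-1)$-polytope, and $F\mapsto F\cap H$ is an inclusion-preserving bijection from $\{\,F:\{w\}\subsetneq F\subsetneq P^{+}\,\}$ onto the nonempty proper faces of $Q$, i.e.\ onto the boundary complex of $Q$ (see, e.g., \citealt[\S2.1]{Zie95}). The composite is an inclusion-preserving bijection from $\lk(x)$ onto the boundary complex of the $(d-1)$-polytope $Q$ --- precisely the asserted combinatorial isomorphism. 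The ``in particular'' claim follows at once: this isomorphism identifies the $1$-skeleton of $\lk(x)$ with the $1$-skeleton of $Q$, and for $d\ge 3$ the polytope $Q$ has dimension at least $2$, so its $1$-skeleton is the graph $G(Q)$.

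I expect the main obstacle to be the identification of the horizon faces with $\lk(x)$, together with the beneath--beyond bookkeeping --- in particular keeping $w$ \emph{strictly} beyond every facet of $\F_x$ and \emph{strictly} beneath every other facet, so that no facet of $P$ lies ambiguously on a hyperplane through $w$. The existence of $w$ is elementary and the vertex-figure correspondence is standard, so those steps should be routine.
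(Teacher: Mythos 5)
Your proof is correct, but there is no in-paper argument to measure it against: the paper states \cref{prop:link-polytope} as a quoted result (\citealt[Ex.~8.6]{Zie95}), with the proof deferred to \citet[Prop.~12]{BuiPinUgo18}, so what you have written is a genuine self-contained proof of the cited exercise rather than a variant of anything proved here. The route you take is sound: $w=2x-y$ is beyond exactly the facets in $\F_x$ and strictly beneath all other facets once the interior point $y$ is close enough to $x$; Gr\"unbaum's beneath--beyond theorem then identifies the faces $F$ of $P^{+}=\conv(P\cup\{w\})$ with $\{w\}\subsetneq F\subsetneq P^{+}$ with your horizon faces via $G\mapsto\conv(G\cup\{w\})$; and your identification of the horizon faces with the nonempty faces of $\lk(x)$ is correct, resting on the standard fact that every nonempty proper face of $P$ is the intersection of the facets containing it. The technical worry you raise at the end resolves itself: the extra case in Gr\"unbaum's theorem (a face $G$ with $w\in\aff(G)$) cannot occur, because $\aff(G)$ lies in the affine hull of some facet of $P$ and you have placed $w$ strictly beneath or strictly beyond every facet hyperplane. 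Composing with the vertex-figure correspondence at $w$ in $P^{+}$ then yields the asserted isomorphism with the boundary complex of a $(d-1)$-polytope, and the graph statement follows. Note that the intermediate step through $P^{+}$ is genuinely necessary and your construction passes the paper's own sanity check: for the $4$-cube the link of a vertex is the boundary complex of the rhombic dodecahedron (\cref{fig:link-polytope}), which is the vertex figure of $P^{+}$ at $w$, not the vertex figure of the cube at $x$ (a simplex).
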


We proved \cref{prop:link-polytope} in \citet[Prop.~12]{BuiPinUgo18} and exemplified it in \cref{fig:link-polytope}. In this paper, we prove the following. The  part about links appears to be new.

\begin{figure} 
\includegraphics{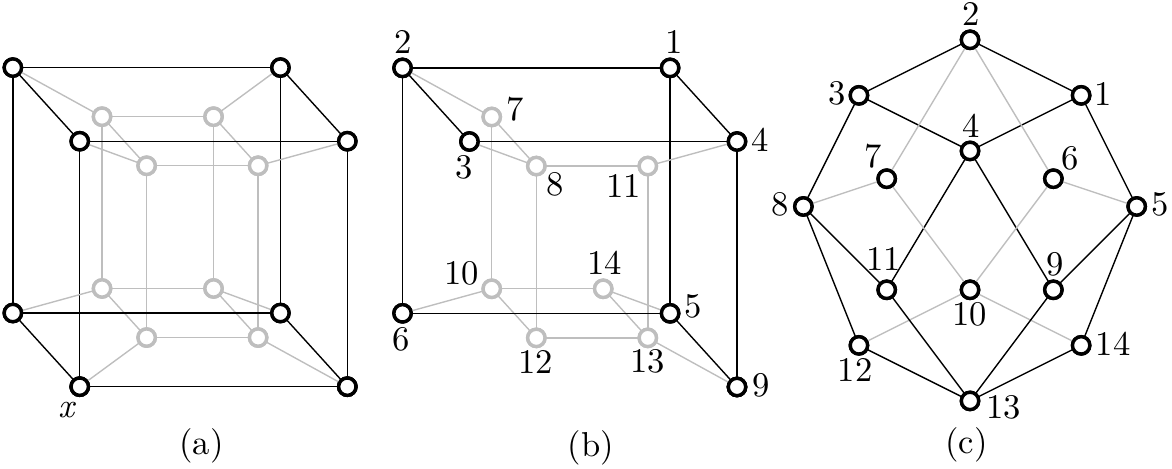}
\caption{The link of a vertex in the four-dimensional cube, the convex hull of the $2^{4}$ vectors $(\pm 1,\pm 1,\pm 1,\pm 1)$ in $\R^{4}$.  (a) The four-dimensional cube with a vertex $x$ highlighted. (b)  The link of the vertex $x$ in the cube.  (c) The link of the vertex $x$ as the boundary complex of the rhombic dodecahedron (\cref{prop:link-polytope}).} \label{fig:link-polytope}   
\end{figure}

\begin{theorem}
\label{thm:Balinski} For $d\ge 1$, the graph of $d$-polytope $P$ is $d$-connected. Besides, for each $d\ge 3$, each vertex $x$ in a $d$-separator $X$ of $G(P)$ lies in the link of every other vertex of $X$, and the set $X\setminus \left\{x\right\}$ is a separator of the link of $x$.  
\end{theorem}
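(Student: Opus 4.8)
The plan is to argue by induction on $d$. The cases $d=1$ (where $G(P)=K_{2}$) and $d=2$ (where $G(P)$ is a cycle on at least three vertices) are immediate and carry no statement about links, so assume $d\ge 3$ and that the graph of every $(d-1)$-polytope is $(d-1)$-connected. By \cref{prop:link-polytope} this hypothesis says exactly that, for every vertex $v$ of $P$, the graph $G(\lk(v))$ is $(d-1)$-connected; its vertex set is the set $N(v)$ of neighbours of $v$ in $G(P)$, it has at least $d$ vertices, and every edge of $\lk(v)$ is an edge of $P$.

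First I would record the standard connectivity fact that does the work: if a set $X$ of vertices separates two vertices $y,z$ of $G(P)$ and is \emph{minimal} with this property, then every $v\in X$ has a neighbour in the component $C_{y}$ of $G(P)-X$ containing $y$ and a neighbour in the component $C_{z}$ containing $z$ --- for otherwise, say if $v$ had no neighbour in $C_{y}$, the smaller set $X\setminus\{v\}$ would still separate $y$ from $z$.

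Now let $X$ be a separator of $G(P)$ with $|X|\le d$. Passing to a minimal subset of $X$ that separates some pair $y,z$ (still a separator of $G(P)$, of size at most $d$), I may assume $X$ itself is minimal for the pair $y,z$. Fix $v\in X$ and set $Y=N(v)\cap X=N(v)\cap(X\setminus\{v\})$, a set of vertices of $G(\lk(v))$. Since every edge of $\lk(v)$ is an edge of $P$ and $G(P)-X$ has no edge between distinct components, deleting $Y$ from $G(\lk(v))$ separates the two nonempty sets $N(v)\cap C_{y}$ and $N(v)\cap C_{z}$ --- nonempty by the fact above, and disjoint from $X$. Hence $Y$ separates the $(d-1)$-connected graph $G(\lk(v))$, so $|Y|\ge d-1$; but $Y\subseteq X\setminus\{v\}$ and $|X\setminus\{v\}|\le d-1$, forcing $|X|=d$ and $Y=X\setminus\{v\}$. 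The equality $|X|=d$, which holds for the minimal subset of any separator of size at most $d$, shows every separator of $G(P)$ has at least $d$ vertices; as $P$ has at least $d+1$ vertices, $G(P)$ is $d$-connected. A $d$-separator $X$ is therefore minimal for whichever pair it separates (a proper subset separating that pair would be a separator with fewer than $d$ vertices), so the conclusion of this paragraph applies to every $v\in X$: each other vertex of $X$ is a neighbour of $v$ and hence lies in $\lk(v)$, and $X\setminus\{v\}=Y$ is a separator of $\lk(v)$.

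I expect the only delicate point to be the disconnection claim for $G(\lk(v))$: it rests on $\lk(v)$ inheriting all of its edges from $P$ --- so that the component structure of $G(P)-X$ restricts cleanly to $N(v)$ --- together with the fact that $v$, belonging to a minimal separator, genuinely has neighbours on both sides; everything else is bookkeeping and the two appeals to \cref{prop:link-polytope} and the induction hypothesis. The geometric \cref{lem:separator-hyperplane} plays no role in this argument, although it complements the picture of $d$-separators.
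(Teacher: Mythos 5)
Your argument has the right overall shape, but it rests on a false identification: you assert that the vertex set of $G(\lk(v))$ is $N(v)$, the set of neighbours of $v$ in $G(P)$. That is true for simplicial polytopes, but not in general: the vertices of $\lk(v)$ are all vertices of $P$ that share a facet with $v$, and this set can be much larger than $N(v)$ (in the $4$-cube of \cref{fig:link-polytope}, a vertex has $4$ neighbours while its link, the boundary complex of the rhombic dodecahedron, has $14$ vertices). This breaks your key step in two ways. First, deleting only $Y=N(v)\cap X$ from $G(\lk(v))$ need not separate $N(v)\cap C_{y}$ from $N(v)\cap C_{z}$: a path in $G(\lk(v))-Y$ may pass through vertices of $X$ that lie in $V(\lk(v))\setminus N(v)$ and were never deleted, so it need not be a path of $G(P)-X$. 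Concretely, take $P$ the $d$-cube and $X$ the set of neighbours of a vertex $y$: this is a minimal $d$-separator, its vertices are pairwise nonadjacent, so for $v\in X$ you get $Y=\emptyset$, and your claim would say $G(\lk(v))$ is itself disconnected, which is false. Second, your final conclusion is strictly stronger than the theorem --- you deduce that every other vertex of $X$ is a \emph{neighbour} of $v$, which the same cube example refutes (and which the paper explicitly warns against in the introduction); the theorem only asserts membership in the link.

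The repair is small: set $Y=V(\lk(v))\cap X$ instead of $N(v)\cap X$. Then $G(\lk(v))-Y$ contains no vertex of $X$, every edge of $\lk(v)$ is an edge of $P$, so paths of $G(\lk(v))-Y$ are paths of $G(P)-X$ and cannot join $N(v)\cap C_{y}$ to $N(v)\cap C_{z}$ (both nonempty by your minimality observation, both contained in $V(\lk(v))$ since $N(v)\subseteq V(\lk(v))$, and both disjoint from $X$). The induction hypothesis and \cref{prop:link-polytope} then give $|Y|\ge d-1$, hence $|X|\ge d$, and for a $d$-separator equality forces $X\setminus\{v\}=Y\subseteq V(\lk(v))$ and $X\setminus\{v\}$ a separator of $G(\lk(v))$ --- exactly the statement of the theorem. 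With this correction your route is genuinely different from the paper's, which assumes the separator minimum, invokes Whitney's and Menger's theorems to produce $|X|$ independent $y$--$z$ paths, and reads the separation of $\lk(x)$ off the path through $x$; your version replaces those two theorems by the elementary fact that every vertex of a minimal separator has neighbours in both components, which is arguably more self-contained.
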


As a corollary, we get a known result on $d$-separators in simplicial $d$-polytopes \citep[p.~509]{GooORo17-3rd}; see \cref{cor:link-simplicial}. A polytope is \textit{simplicial} if all its facets are simplices, and  a {\it $d$-simplex} is a $d$-polytope whose $d+1$ vertices form an affinely independent set in $\mathbb R^d$. An \textit{empty $(d-1)$-simplex} in a $d$-polytope $P$ is a set of $d$ vertices of $P$ that does not form a face of $P$ but every proper subset does. An empty $(d-1)$-simplex is also called a \textit{missing $(d-1)$-simplex}. 

\begin{corollary} Let $P$ be a simplicial $d$-polytope with $d\ge 2$. A $d$-separator of $G(P)$ forms an empty $(d-1)$-simplex of $P$.
\label{cor:link-simplicial}  
\end{corollary}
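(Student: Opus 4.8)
The plan is to induct on $d$, using the link part of \cref{thm:Balinski} to drop the dimension one vertex at a time, and exploiting simpliciality to make the bookkeeping transparent. Let $X$ be a $d$-separator of $G(P)$; then $\card{X}=d$, so it remains only to show that $X$ is not a face of $P$ while every proper subset of $X$ is a face of $P$ (by \cref{lem:separator-hyperplane}, $X$ is moreover affinely independent, which is what justifies calling it a simplex). For the base case $d=2$, the graph $G(P)$ is a cycle, and a $2$-separator $X=\{u,v\}$ necessarily consists of two non-adjacent vertices; hence $\{u\}$, $\{v\}$ and $\emptyset$ are faces of $P$ while $\{u,v\}$ is not an edge, so $X$ is an empty $1$-simplex.

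Now assume $d\ge 3$ and that the corollary holds for $(d-1)$-polytopes. I will use the standard description of links in a simplicial polytope: for a vertex $x$ of $P$ and a set $\tau\subseteq V(P)\setminus\{x\}$, one has $\tau\in\lk(x)$ if and only if $\tau\cup\{x\}$ is a face of $P$; in particular $\lk(x)$ is itself a simplicial complex, so the $(d-1)$-polytope $Q$ provided by \cref{prop:link-polytope}, whose boundary complex is combinatorially $\lk(x)$ and whose graph is the graph of $\lk(x)$, is \emph{simplicial}. Fix $x\in X$. By \cref{thm:Balinski}, $X\setminus\{x\}$ separates $\lk(x)$, that is, separates $G(Q)$, and since $\card{X\setminus\{x\}}=d-1$ it is a $(d-1)$-separator of $G(Q)$. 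The inductive hypothesis applied to $Q$ then says that $X\setminus\{x\}$ is an empty $(d-2)$-simplex of $Q$. Translating through the dictionary above, this means that $X=(X\setminus\{x\})\cup\{x\}$ is not a face of $P$, while $T\cup\{x\}$ is a face of $P$ for every proper subset $T\subsetneq X\setminus\{x\}$.

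Applying the previous paragraph to any single $x\in X$ already yields that $X$ is not a face of $P$. To see that every proper subset $S\subsetneq X$ is a face of $P$: if $S=\emptyset$ this is immediate, and otherwise pick some $x\in S$ and set $T:=S\setminus\{x\}$, so that $\card{T}\le d-2$ and hence $T\subsetneq X\setminus\{x\}$; the last sentence of the previous paragraph (for this choice of $x$) then gives that $S=T\cup\{x\}$ is a face of $P$. Therefore $X$ is not a face of $P$ but every proper subset is, i.e.\ $X$ is an empty $(d-1)$-simplex of $P$, completing the induction. I expect the only genuine subtlety to be organisational rather than technical: a fixed $x\in X$ only certifies those subsets of $X$ that contain $x$, so one must range over all $x\in X$ and assemble the conclusions, while keeping straight the correspondence between faces of $\lk(x)$ and faces of $P$ (and the fact that simpliciality is inherited by the link, so that the inductive hypothesis is legitimately available).
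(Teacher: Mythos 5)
Your proof is correct and follows essentially the same route as the paper: induction on $d$, using the link statement of \cref{thm:Balinski} together with \cref{prop:link-polytope} and the simpliciality of the link to translate faces of $\lk(x)$ into faces of $P$ by adjoining $x$. The only differences are cosmetic — you anchor the induction at $d=2$ rather than $d=3$ and spell out explicitly (via the link dictionary) why $X$ itself is not a face of $P$, a point the paper leaves implicit.
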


We remark that the paragraph after Balinski's theorem in \citet[p.~509]{GooORo17-3rd}  is meant to concern only simplicial $d$-polytopes, and not $d$-polytopes in general. While it is true that a  $d$-separator  of the graph of a $d$-polytope must form an affinely independent set in $\R^{d}$, it is not true that it must form an empty simplex. Take, for instance, the neighbours of a vertex in a $d$-dimensional cube (\cref{fig:link-polytope}(a)).	 
 
We follow \cite{Die05} for the graph theoretical terminology that we have not defined. 

\section{Proofs of \cref{thm:Balinski} and \cref{cor:link-simplicial}}

  A  path between vertices $x$ and $y$ in a graph is an {\it $x-y$ path}, and two $x-y$ paths  are {\it independent}\index{independent} if they share no inner vertex.  For a path $L:=x_{0}\ldots x_{n}$ and for $0\le i\le j\le n$, we write $x_{i}Lx_{j}$  to denote the subpath $x_{i}\ldots x_{j}$. We require a theorem of \cite{Whi32} and one of \cite{Menger1927}.
 
 \begin{theorem}[{\citealt{Whi32}}]\label{thm:Whitney}  Let $G$ be a  graph with at least one pair of nonadjacent vertices. Then there is a minimum separator of $G$ disconnecting two nonadjacent vertices. 
\end{theorem}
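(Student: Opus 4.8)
The plan is to work directly with the component structure of a minimum separator. Since $G$ has a pair of nonadjacent vertices it is not complete, so separators exist at all: for nonadjacent $u,v$ the set $V(G)\sm\{u,v\}$ separates $u$ and $v$, because in the induced subgraph on $\{u,v\}$ the two vertices are joined by no edge and hence by no path. Consequently the quantity $\kappa:=\min\{\,\card{S}: S \text{ is a separator of } G\,\}$ is a well-defined finite number, and a \emph{minimum separator} is one of cardinality $\kappa$. I would then aim to exhibit such an $S$ together with a nonadjacent pair that it disconnects.

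First I would fix a minimum separator $S$ and read off a nonadjacent pair from $G-S$. By definition $S$ separates some two vertices, so $G-S$ is disconnected; let $C_{1}$ and $C_{2}$ be two distinct components and choose $x\in C_{1}$ and $y\in C_{2}$. Two verifications remain. The pair $\{x,y\}$ is nonadjacent: since $x,y\notin S$, an edge $xy$ of $G$ would survive in $G-S$ and place $x$ and $y$ in a common component, a contradiction. And $S$ separates $x$ from $y$: any $x$-$y$ path avoiding $S$ would be a path of $G-S$ joining $C_{1}$ to $C_{2}$, again impossible. Thus $S$ is a minimum separator that disconnects the nonadjacent pair $\{x,y\}$, which is exactly the assertion.

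The argument is short, and the one genuinely load-bearing observation is that a separator can never separate two \emph{adjacent} vertices: the edge joining them is a path containing no vertex of the separator, since its endpoints are excluded by definition. This is precisely what forces the pair extracted from two distinct components to be nonadjacent, and it is the step I would state most carefully. If one wants the slightly stronger reading in which $S$ is also minimum \emph{for the pair} $\{x,y\}$ — the form suited to feeding into Menger's theorem — it suffices to note that every set separating $x$ and $y$ is in particular a separator of $G$ and hence has at least $\kappa=\card{S}$ vertices, so none is smaller than $S$. I do not anticipate a substantive obstacle here; the only care needed is in keeping the two quantifiers apart (a separator minimum over all of $G$ versus a separator minimum for one fixed pair) and in the nonadjacency bookkeeping described above.
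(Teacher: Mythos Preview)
Your argument is correct: picking any minimum separator $S$, then choosing $x$ and $y$ from distinct components of $G-S$, and observing that an edge $xy$ would survive in $G-S$, is exactly the standard verification. The additional remark that $S$ is also minimum \emph{for the pair} $\{x,y\}$ is a nice touch, since that is precisely the form needed when Menger's theorem is applied downstream.

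There is, however, nothing in the paper to compare your proof against. The paper does not prove \cref{thm:Whitney}; it merely quotes it as a classical result of \cite{Whi32} and uses it as a black box in the proof of \cref{thm:Balinski}. Your self-contained justification is the kind of short argument one would expect to accompany such a citation, and it stands on its own.
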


 \begin{theorem}[{\citealt{Menger1927}}]\label{thm:Menger} Let $G$ be a  graph, and let $x$ and $y$ be two nonadjacent vertices. Then the minimum number of vertices separating $x$ from $y$ in $G$ equals the maximum number of independent $x-y$ paths in $G$. 
\end{theorem}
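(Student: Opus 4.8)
The final statement is the classical local, vertex form of Menger's theorem, so my plan is the standard two-part argument, with essentially all the work in producing the paths. Write $k$ for the minimum size of an $x$--$y$ separator. The easy inequality is that there are at most $k$ independent $x$--$y$ paths: since any separator $X$ satisfies $x,y\notin X$ yet meets every $x$--$y$ path, it must contain an inner vertex of each path in a family of independent paths, and independence forces these inner vertices to be distinct; hence the family has size at most $\card{X}$ for every separator $X$, in particular at most $k$. The substantive half is the reverse inequality, namely the construction of $k$ pairwise independent $x$--$y$ paths, which I would obtain by induction on the number of edges $\card{E(G)}$, the base cases $k\le 1$ being immediate.

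For the inductive step I would first dispose of common neighbours. If some vertex $w$ is adjacent to both $x$ and $y$, then $G-w$ has fewer edges and its minimum $x$--$y$ separator has size at least $k-1$, so the induction hypothesis yields at least $k-1$ independent $x$--$y$ paths in $G-w$; adjoining the path $x\,w\,y$, whose only inner vertex $w$ is absent from $G-w$, produces the required $k$ independent paths. So I may henceforth assume that $x$ and $y$ have no common neighbour, and in particular that every $x$--$y$ path has length at least three.

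The heart of the argument is a dichotomy on a minimum separator $S$, so $\card{S}=k$. Removing $S$ leaves a component $C_x\ni x$ and a component $C_y\ni y$, and minimality of $S$ ensures each vertex of $S$ has a neighbour in $C_x$ and a neighbour in $C_y$. In the first case some minimum separator $S$ satisfies $C_x\neq\{x\}$ and $C_y\neq\{y\}$ (equivalently $S$ is neither $N(x)$ nor $N(y)$, where $N(\cdot)$ denotes the neighbourhood). I would then split $G$ along $S$: form $G_x$ from the subgraph induced by $C_x\cup S$ by adding a new vertex $t_x$ adjacent to all of $S$, and symmetrically $G_y$ with a new vertex $t_y$. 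Each of $G_x,G_y$ has terminal connectivity exactly $k$ and --- crucially --- strictly fewer edges than $G$, since the discarded side carries the $k$ separator-to-side edges guaranteed by minimality together with at least one internal edge, the discarded component being connected with at least two vertices. The induction hypothesis then supplies $k$ independent paths running from $x$ onto the $k$ vertices of $S$ and $k$ independent paths from $S$ to $y$; concatenating the two families at $S$ gives $k$ independent $x$--$y$ paths.

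In the remaining case every minimum separator equals $N(x)$ or $N(y)$. Here I would take a shortest $x$--$y$ path $x\,u\,v\ldots y$; shortestness together with the absence of common neighbours gives $u\not\sim y$ and $v\not\sim x$. Deleting the edge $uv$ either leaves the minimum separator equal to $k$, so that the induction hypothesis finishes directly, or drops it to $k-1$ through a separator $T$; in the latter event $T\cup\{u\}$ and $T\cup\{v\}$ are both minimum separators of $G$, hence each is $N(x)$ or $N(y)$, and the incidences $u\not\sim y$, $v\not\sim x$ force $T\cup\{u\}=N(x)$ and $T\cup\{v\}=N(y)$, whence $T\subseteq N(x)\cap N(y)=\emptyset$ and $k=1$, a case already settled. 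I expect the genuine obstacle to lie in the splitting step: one must simultaneously confirm that the two auxiliary graphs have strictly fewer edges (which is exactly why the case is arranged so that $S$ is internal, unequal to both $N(x)$ and $N(y)$), that their terminal connectivity remains $k$, and that the two path families glue at $S$ into genuinely independent paths rather than mere walks. Organising the reductions so that every instance passes to a strictly smaller one --- and recognising that the common-neighbour reduction is precisely what collapses the degenerate subcase --- is the delicate bookkeeping of the proof.
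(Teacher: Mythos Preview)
Your argument is the standard textbook proof of the local vertex form of Menger's theorem (essentially the proof in Diestel's \emph{Graph Theory}), and it is correct as written; the splitting step, the edge count, and the degenerate-case collapse are all handled properly.

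However, there is nothing to compare against: the paper does not prove this statement. Menger's theorem is quoted as a classical external result (with a citation to \cite{Menger1927}) and is used as a black box in the proof of \cref{thm:Balinski}. So while your proof is sound, it is supplying something the paper never set out to supply.
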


\begin{proof}[Proof of \cref{thm:Balinski}]

Let $P$ be a $d$-polytope and let $G$ be its graph. Then $G$ has at least $d+1$ vertices. If $G$ is a complete graph, there is nothing to prove, and suppose otherwise. In this case, $G$ has at least one pair of nonadjacent vertices.  For $d=2$, $G$ is $d$-connected. And so induct on $d$, assuming that $d\ge 3$ and that the theorem is true for $d-1$. Let  $X$ be a separator in $G$ of minimum cardinality, and let $y$ and $z$ be vertices separated by $X$. Then $y,z\notin X$. According to Whitney's theorem (\cref{thm:Whitney}), there is a minimum separator of $G$ disconnecting two nonadjacent vertices. Hence we may assume that $y$ and $z$ are nonadjacent, and by Menger's theorem (\cref{thm:Menger}), that there are $|X|$ independent $y-z$ paths in $G$, each containing precisely one vertex from $X$. Let $L$ be one such $y-z$ paths and let $x$ be the vertex in $X\cap V(L)$; say that $L=u_{1}\ldots u_{m}$ such that $y=u_{1}$, $u_{j}=x$, and $u_{m}=z$.  

The graph $G_{x}$ of the link of $x$ in $P$ is isomorphic to the graph of a $(d-1)$-polytope (\cref{prop:link-polytope}), and by the induction hypothesis it is $(d-1)$-connected. The neighbours of $x$ are all part of  $\lk(x)$, and so $u_{j-1},u_{j+1}\in G_{x}$.  Again, from Menger's theorem  follows the existence of at least $d-1$ independent $u_{j-1}-u_{j+1}$ paths in $G_{x}$.  We must have that $X\setminus \left\{x\right\}$ separates  $u_{j-1}$ from $u_{j+1}$ in $G_{x}$, since $X$ separates $y$ from $z$.  Hence $|X\setminus \left\{x\right\}|\ge d-1$, which establishes that $G$ is $d$-connected.    

Finally, let $d\ge 3$ and suppose $X$ is a $d$-separator of $G$. As stated above, the set $X\setminus \left\{x\right\}$, of cardinality $d-1$, separates $G_{x}$, implying that $X\setminus \left\{x\right\} \subseteq V(G_{x})$. The aforementioned path $L$ was arbitrary among the $y-z$ paths separated by $X$, and each such path contains a unique vertex of $X$. It follows that every vertex in $X$ is in the link of every other vertex of $X$, which  concludes the proof of the theorem.      
\end{proof}
	
\begin{proof}[Proof of \cref{cor:link-simplicial}] Let $P$ be a simplicial $d$-polytope and let $G$ be its graph.  
Suppose that $X$ is a $d$-separator of $G$, that  $x$ is a vertex of $X$,  and that  $G_{x}$ is the graph of the link of $x$ in $P$.  

A simplicial 2-polytope is a polygon and a 2-separator in it satisfies the corollary. So assume that $d\ge 3$. From \cref{thm:Balinski}, it follows that every vertex in $X$ is in the link of every other vertex of $X$, and that $X\setminus\left\{x\right\}$ is a $(d-1)$-separator of $G_{x}$. Consequently, the subgraph $G[X]$ of $G$ induced by $X$ is a complete graph, as the set of neighbours of each vertex in $P$ coincides with the vertex set of the link of the vertex.    

If $d=3$, then, from  $G[X]$ being a complete graph, it follows that it is an empty $2$-simplex. And so an inductive argument on $d$ can start. Assume that $d\ge 4$. From the definition of a link and \cref{prop:link-polytope}, we obtain that $\lk(x)$ is combinatorially isomorphic to the boundary complex of a simplicial $(d-1)$-polytope. 
 
By the induction hypothesis on $\lk(x)$, every proper subset of $X\setminus\left\{x\right\}$ forms a face $F$ of $\lk(x)$. And from the definition of $\lk(x)$, that face $F$ lies in a facet of $P$ containing $x$, a $(d-1)$-simplex containing $x$. As a consequence, if $F$ is a face of dimension $k$, then the set $\conv(F\cup \left\{x\right\})$ is a face of $P$ of dimension $k+1$. Since the vertex $x$ of $X$ was taken arbitrarily, the corollary ensues.
\end{proof}


\end{document}